\newtheorem{theorem}{Theorem}
\newtheorem{lemma}{Lemma}
\newtheorem{remark}{Remark}
\newcommand{\ze}{\mathbf{Z}}
\newcommand{\re}{\mathbf{R}}
\newcommand{\om}{\omega}
\newcommand{\e}{\varepsilon}
\newcommand{\p}{\mathbf{P}}
\newcommand{\esp}{\mathbf{E}}
\newcommand{\wh}{\mathscr{W}}
\newcommand{\she}{\mathcal{Z}}
\newcommand{\ba}{\omega}
\newcommand{\bg}{\omega}
\begin{document}

\begin{frontmatter}

\title{On the (strict) positivity of solutions of the stochastic heat equation}
\runtitle{Positivity of the SHE}


\author{\fnms{Gregorio R.} \snm{Moreno Flores}\ead[label=e1]{grmoreno@mat.puc.cl}\thanksref{t1}}
\thankstext{t1}{The author acknowledges the support of Fondecyt grant number 1130280 and Iniciativia Cientifica Milenio grant number NC130062 }
\address{\printead{e1} \\ Departamento de Matem\'aticas \\ Vicu\~na Mackenna 4860 \\ Santiago, Chile}
\affiliation{Pontificia Universidad Cat\'olica de Chile}

\runauthor{Gregorio R. Moreno Flores}

\begin{abstract}
We give a new proof of the fact that the solutions of the stochastic heat equation, started with non-negative initial conditions, are strictly positive at positive times. The proof uses concentration of measure arguments for discrete directed polymers in Gaussian environments, originated in M. Talagrand's work on spin glasses and brought to directed polymers by Ph. Carmona and Y. Hu. We also get slightly improved bounds on the lower tail of the solutions of the stochastic heat equation started with a delta initial condition.
\end{abstract}

\begin{keyword}[class=MSC]
\kwd{60H15}
\kwd{60K37}
\kwd{60K35}
\end{keyword}

\begin{keyword}
\kwd{Stochastic Heat Equation}
\kwd{Directed Polymers in Random Environments, Kardar-Parisi-Zhang equation}
\end{keyword}


\end{frontmatter}


A very well known theorem proved by Mueller insures the strict positivity of the solution of the Stochastic Heat Equation (SHE) with non-negative initial data \cite{Mueller}. 

Mueller's theorem has gained new attention due to the links between the SHE and the Continuum
Directed Polymer (CDP) \cite{AKQ2}, and, more generaly, with the KPZ equation (see the review
\cite{Ivan}). In particular, it implies the positivity of the partition function of the CDP. This
random measure on directed paths from $(0,0)$ to $(T,x)$ is defined by
\begin{eqnarray*}
 \mu_{x,T}(X_{t_1}\in dx_1,\cdots,X_{t_k}\in dx_k)= \frac{1}{\she(0,0;T,x)} 
\prod^{k-1}_{j=0}\she(t_j,x_j;t_{j+1},x_{j+1}) \, \she(t_k,x_k;T,x) dx_1 \cdots dx_k,
\end{eqnarray*}
where $\she(s,u;t,v)$ is obtained as the solution of
\begin{eqnarray*}
 \partial_t \she(s,u;\cdot,\, \cdot)&=& \tfrac12 \Delta  \she(s,u;\cdot,\, \cdot) + \she(s,u;\cdot,\, \cdot) \wh, \\
 \she(s,u;s,\cdot) &=& \delta_u(\cdot).
\end{eqnarray*}
The SHE arises as the limit of the renormalized partition function of discrete directed polymers \cite{AKQ} and the CDP as the weak limit of the discrete directed polymer path measure (see \cite{CSY} for a general review on directed polymers).

A proof of the positivity of the solutions of the SHE contained inside the theory of directed polymers is hence desirable and this is the approach we will follow in this note. Our proof, together with providing a more straightforward argument, also improves existing bounds on the tails of the solution of the SHE. Our methods are strongly inspired by Talagrand's use of Gaussian concentration in spin glasses, and Carmona-Hu \cite{CH} where these ideas are applied to directed polymers in a Gaussian environment.

\section{Results}
In the following, unless stated otherwise, $\she(t,x)$ is the continuous modification of the solution of the stochastic heat equation
 \begin{eqnarray}\label{she-delta1}
  \partial_t \she \, &=& \tfrac12 \Delta  \she + \she \wh,\\ 
  \label{she-delta2}
  \she(0,x) &=& \delta_0(x),
 \end{eqnarray}
where $\wh$ is a space-time white noise.
\begin{theorem}\label{main-th}
$a)$ There exists a locally bounded function $c(t,x)>0$, locally bounded away from $0$, such that
 \begin{eqnarray}\label{she-tail-x}
  \p\left[ \she(t,x) < c(t,x)\,e^{-u/c(t,x)}\right]\leq e^{-u^2/2},
 \end{eqnarray}
 hence, for all $p>0$, there is a locally bounded function $\kappa_p(t,x)>0$, locally bounded away
from $0$, such that,
\begin{eqnarray}\label{moments}
 \esp \she(t,x)^{-p}\leq \kappa_p(t,x)\, \exp\{ \, p^2/\kappa_p(t,x) \}, \quad \forall \, t>0,\, x\in \re.
\end{eqnarray}
\noindent $b)$ We have 
\begin{eqnarray}\label{all-x-positive}
\p[\, \she(t,x)=0,\, \text{for some}\,\, t>0,\, x\in \re \,]=0.
\end{eqnarray}
\end{theorem}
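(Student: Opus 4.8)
The plan is to realise $\she(t,x)$ as a scaling limit of discrete directed polymer partition functions in a Gaussian environment, where Gaussian concentration for convex functionals is available, and to control the relevant variance proxy by the polymer overlap. Let $\omega=(\omega(i,y))_{i\ge 1,\,y\in\ze}$ be i.i.d.\ standard Gaussians and let $\zeta_n(t,x)$ be the suitably rescaled point-to-point partition function of the simple random walk at inverse temperature $\beta_n=\hat\beta\,n^{-1/4}$, so that by the intermediate disorder theorem of Alberts--Khanin--Quastel \cite{AKQ} one has $\zeta_n(t,x)\Rightarrow\she(t,x)$ in distribution for every $t>0$, $x\in\re$. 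Up to a deterministic additive constant, $\log\zeta_n$ equals $\log\sum_S w(S)\exp(\beta_n\sum_i\omega(i,S_i))$ with all $w(S)>0$: a log--sum--exp of affine functions of $\omega$, hence \emph{convex} on the underlying Gaussian space, with gradient $\partial_{\omega(i,y)}\log\zeta_n=\beta_n\,\mu_n^\omega(S_i=y)$ and
\[
  |\nabla_\omega\log\zeta_n|^2=\beta_n^2\sum_{i=1}^n\sum_y\mu_n^\omega(S_i=y)^2=\beta_n^2\,\mu_n^{\otimes 2}\Big[\textstyle\sum_{i=1}^n\uno{S_i=S_i'}\Big],
\]
where $\mu_n^\omega$ is the polymer measure. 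The worst case of this quantity is of order $\beta_n^2 n=\hat\beta^2\sqrt n\to\infty$, which is useless; but a direct second moment estimate in the intermediate disorder scaling shows that its $\omega$-average --- the mean overlap --- is bounded by some $\sigma^2(t,x)<\infty$ uniformly in $n$. This averaging effect is the Talagrand/Carmona--Hu point and the crux of the whole argument.

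For part $a)$ I would then apply the concentration inequality of Carmona--Hu \cite{CH} for convex functionals of a Gaussian field: truncating to the overwhelmingly likely event $\{\,|\nabla_\omega\log\zeta_n|^2\le\Lambda\,\E|\nabla_\omega\log\zeta_n|^2\,\}$ with $\Lambda$ large and controlling the contribution of its complement, one gets
\[
  \p\big[\log\zeta_n(t,x)<m_n(t,x)-u\big]\le e^{-u^2/2\sigma^2(t,x)}+\eta_n,\qquad \eta_n\to 0,
\]
with $m_n(t,x)$ a median (working with the median rather than the mean sidesteps a uniform integrability issue at this stage). Each $\zeta_n>0$, the medians $m_n(t,x)$ are bounded, and $\zeta_n(t,x)\Rightarrow\she(t,x)$; passing to the limit along a subsequence with $m_n(t,x)\to m(t,x)$ (portmanteau) yields $\p[\she(t,x)<e^{m(t,x)-u}]\le e^{-u^2/2\sigma^2(t,x)}$ for all $u>0$. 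Putting $c(t,x)=\min\{\,e^{m(t,x)},\,\sigma(t,x)^{-1}\,\}$ turns this into \eqref{she-tail-x}, and joint continuity of $\she$ together with the standard $L^p$ and negative moment bounds on compacts of $(0,\infty)\times\re$ shows that $m$ and $\sigma$, hence $c$, are locally bounded and locally bounded away from $0$. Finally \eqref{moments} follows by writing $\E\she(t,x)^{-p}=\int_0^\infty\p[\she(t,x)<s^{-1/p}]\,ds$, bounding the integrand by $1$ for small $s$ and by \eqref{she-tail-x} for large $s$, a routine Gaussian integral producing the factor $\exp\{p^2/\kappa_p(t,x)\}$.

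For part $b)$, the same limiting argument already gives $\p[\she(t,x)=0]=0$ for each fixed $t>0$, $x\in\re$; what remains is to make this simultaneous in $(t,x)$. Fix a box $K=[\ep,T]\times[-R,R]$. Since $\she\ge 0$ (classical, and immediate from the polymer representation) and $\she$ has a jointly continuous version on $(0,\infty)\times\re$, it suffices to show $\min_K\she>0$ almost surely. Take a $1/n$-net $(t_i,x_i)_{i\le N_n}$ of $K$ with $N_n=O(n^2)$. If $\min_K\she=0$ then $\she(t_i,x_i)\le\Omega_n$ for some $i$, where $\Omega_n$ is the oscillation of $\she$ on $K$ at scale $1/n$; by the classical bound $\E|\she(t,x)-\she(s,y)|^k\le C_k(K)\big(|t-s|^{k/4}+|x-y|^{k/2}\big)$ and Kolmogorov's criterion, $\p[\Omega_n>n^{-\alpha}]\to 0$ faster than any power of $n$ for $\alpha\in(0,1/4)$, while \eqref{she-tail-x}, uniform over $K$ since $c\ge c_0(K)>0$ there, gives $\p[\she(t_i,x_i)<n^{-\alpha}]\le\exp\{-\tfrac12 c_0^2(\alpha\log n+\log c_0)^2\}$, which decays faster than $1/N_n$. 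A union bound forces $\p[\min_K\she=0]=0$, and exhausting $(0,\infty)\times\re$ by boxes $[1/m,m]\times[-m,m]$ yields \eqref{all-x-positive}. (It is the super-polynomial decay of \eqref{she-tail-x} that makes this union bound close; the weaker \eqref{moments} with $p$ large would in fact already do.)

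The main obstacle is the concentration estimate underlying part $a)$. The naive Lipschitz constant of $\log\zeta_n$ diverges with $n$, so it must be replaced by the \emph{averaged} squared gradient --- the mean polymer overlap, which stays bounded in the intermediate disorder scaling --- and this must be fed into a concentration inequality valid for convex functionals of a Gaussian field; this is precisely where the Carmona--Hu method, and behind it Talagrand's treatment of Gaussian concentration for spin glasses, is indispensable. Everything downstream --- the Alberts--Khanin--Quastel limit, the passage to the limit in the tail bound, integration to negative moments, and the covering argument of part $b)$ --- is then comparatively routine. (An alternative to full space--time discretisation is to mollify the white noise in time at a small scale $\varepsilon$, for which the continuum overlap $\int_0^t\!\!\int(\text{polymer occupation density})^2\,dy\,ds$ is again $O(1)$ uniformly in $\varepsilon$; I would adopt whichever version keeps the overlap bound most transparent.)
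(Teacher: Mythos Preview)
For part $a)$ your strategy coincides with the paper's: discretise via the Alberts--Khanin--Quastel partition function, control the Lipschitz/variance proxy by the polymer overlap, invoke Talagrand/Carmona--Hu Gaussian concentration, and pass to the limit. The paper is a bit more precise in one place where you are loose. Rather than a median and a remainder $\eta_n\to 0$, it fixes the \emph{good set}
\[
A=\Bigl\{\,Z_N\ge\tfrac12\,\E Z_N,\ \langle L_N(S^{(1)},S^{(2)})\rangle^{(2)}_{N,\omega}\le C\sqrt N\,\Bigr\},
\]
shows $\p[A]\ge\delta>0$ uniformly in $N$ (Paley--Zygmund for the first condition, Chebyshev on the \emph{unnormalised} overlap expectation for the second), and then applies Talagrand's isoperimetric bound around $A$ together with the Carmona--Hu comparison $\log Z_N(\omega')\ge\log Z_N(\omega)-\beta\,d_N(\omega,\omega')\sqrt{\langle L_N\rangle^{(2)}_{N,\omega}}$. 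Two remarks on your version: (i) the bad-set probability is only \emph{small} (of order $4K/C$), not $o(1)$ as $n\to\infty$, so your ``$\eta_n\to 0$'' is not correct as stated and should instead be absorbed into the constant $\sqrt{2\log(1/\delta)}$ via the good-set formulation; (ii) arguing that the medians $m_n$ are bounded below looks circular until you have the lower tail---the paper sidesteps this by anchoring at $\tfrac12\,\E Z_N$ directly on $A$. Your derivation of \eqref{moments} from \eqref{she-tail-x} is the standard tail-integration and matches the paper's intent.

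For part $b)$ your route is correct but genuinely different. The paper does not use a net and union bound; instead it combines the negative-moment bound \eqref{moments} with the standard increment estimate $\E|\she(t,x)-\she(s,y)|^p\le C_K\bigl(|x-y|^{p/2}+|t-s|^{p/4}\bigr)$ and Cauchy--Schwarz to obtain an increment bound of the same type for $\she^{-1}$, and then applies Kolmogorov's criterion to produce a continuous modification of $\she^{-1}$ on each compact $K\subset(0,\infty)\times\re$; a continuous function on a compact is bounded, so $\she$ cannot vanish on $K$. This is shorter and only needs \eqref{moments}. Your covering argument, by contrast, exploits the super-polynomial decay in \eqref{she-tail-x} directly and is more quantitative; both close.
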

\begin{remark}\rm A few remarks are in order:
\begin{enumerate}
\item We note that, in \cite{MuNu}, an estimate similar to (\ref{she-tail-x}) is proved (in a
slightly different context), but the right hand side is $\exp\{-u^{3/2-\e}\}$. Based on the links
between KPZ and random matrices (see for instance \cite{Ivan}), it is reasonable to expect that
the optimal bound in our setting is $\exp\{-u^3\}$. Our bound $\exp \{-u^2\}$ comes from Gaussian
concentration arguments and is unlikely to be improved with our methods.
\item Theorem \ref{main-th} b) for general positive initial datas can be obtained by integrating the solution of (\ref{she-delta1})-(\ref{she-delta2}) against the initial conditions, together with comparison arguments with respect to the initial conditions (see \cite{MuNu}). Theorem \ref{AKQ} below, which provides the convergence of partition functions of directed polymers to the SHE, can in fact be extended to provide convergence for general initial datas by introducing boundary values for the polymer (see \cite{MQR}). Then, the aformentionned comparison arguments can be obtained very easyly, noting that, at the discrete level, they hold path-by-path and are preserved by taking weak limit.
\item With a bit of work, Theorem \ref{AKQ} can also be extended to cover the case of the SHE
 \begin{eqnarray*}
  \partial_t \she \, &=& \tfrac12 \Delta  \she + b \she + \sigma \wh \she,
 \end{eqnarray*}
 for a bounded drift $b=b(t,x)$ and some nice $\sigma=\sigma(t,x)$.  The drift can be handled using standard comparison arguments (see \cite{MuNu}, proof of Theorem 2, where this argument is presented) and the arguments of our proof will also follow with minor modifications. Again, the comparison argument can be obtained very easily from directed polymers. 
\end{enumerate}
\end{remark}
\noindent The proof of Theorem \ref{main-th} using concentration of measure is given in Section \ref{proof}. Section \ref{pre} provides useful preliminaries, while the technical estimates are deferred to the appendix.
\section{Some preliminaries}\label{pre}
\subsection{Directed polymers and the AKQ theory} \label{sec-AKQ}
Let $P$ be the law of the simple symmetric random walk $S_t$ on $\ze$, let $\{\om(i,x):\, i,x\}$ be a collection of real numbers (the environment) and let
\begin{eqnarray}
 Z_{N}(\omega,\beta,x) = E\left[ e^{\beta \sum^N_{i=1}\om(i,S_i)}|\, S_N=x\right],
\end{eqnarray} 
be the partition function of the directed polymers in environment $\om$ at inverse temperature
$\beta>0$, where $E$ denotes expectation with respect to $P$. In the following, we will often denote
$Z_N(\om,\beta)=Z_N(\om,\beta,x)$, or even $Z_N(\beta)=Z_N(\om,\beta,x)$, when no confusion is
possible. In this paper, the $\om$'s are chosen to be independent standard normal random variables.
We denote the law of the environment by $\p$ and expectation with respect to $\p$ by $\esp$. In this
case, $\esp Z_{N}(\omega, \beta,x)= \exp \{\tfrac{N}{2} \beta^2\}$. Define
\begin{eqnarray}
 \she_N(t,x) := e^{-\tfrac12 t \sqrt{N}} \, Z_{tN}(\omega,N^{-1/4},x\sqrt{N}) =
\frac{Z_{tN}(\omega,N^{-1/4},x\sqrt{N})}{\esp Z_{tN}(\omega,N^{-1/4},x\sqrt{N})}. 
\end{eqnarray}
The following theorem by Alberts-Khanin-Quastel (AKQ) shows the scaling limit of the partition
function to the solutions of the stochastic heat equation:
\begin{theorem}\label{AKQ}
\cite{AKQ} For each $t>$ and $x\in \re$, we have the convergence in law,
 \begin{eqnarray}
  \she_N(t,x) \Rightarrow \sqrt{4\pi} e^{\frac{\, x^2}{4t}} \she(2t,x),
 \end{eqnarray}
 where $\she$ is the solution of (\ref{she-delta1})-(\ref{she-delta2}). Furthermore, the convergence holds at the process level in $t$ and $x$.
\end{theorem}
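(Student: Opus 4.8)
\textbf{Proof strategy for Theorem \ref{AKQ}.}

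The plan is to follow the original Alberts--Khanin--Quastel argument, which proceeds by a chaos (polynomial) expansion of both sides and a term-by-term convergence estimate, followed by a uniform tail bound that allows passage to the limit. The starting point is the observation that, since $S_i$ is a simple random walk and $\{\omega(i,x)\}$ are i.i.d.\ standard Gaussians, the partition function $Z_{tN}(\omega,\beta,x)$ admits an exact finite chaos expansion in the variables $\omega(i,y)$: writing $e^{\beta\omega} = \sum_{k\ge 0}\frac{\beta^k}{k!}:\!\omega^k\!:$ (Hermite expansion) and using the Markov property of $P$, one obtains
\begin{eqnarray*}
 \she_N(t,x) = \sum_{k\ge 0} \sum_{\substack{0<i_1<\cdots<i_k\le tN\\ y_1,\ldots,y_k}} \beta_N^k\, \psi_N(i_1,y_1,\ldots,i_k,y_k;x)\, \xi(i_1,y_1)\cdots \xi(i_k,y_k),
\end{eqnarray*}
with $\beta_N = N^{-1/4}$ and $\psi_N$ built out of normalized random-walk transition probabilities (ratios of the bridge from $0$ to $x\sqrt N$). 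First I would make this expansion precise, identify $\psi_N$ explicitly in terms of the heat kernel $p_i(y) = P(S_i = y)$, and perform the diffusive rescaling $i_j = N s_j$, $y_j = \sqrt N z_j$.

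The core analytic step is then the local central limit theorem for the simple random walk: $\sqrt N\, p_{Ns}(\sqrt N z) \to \rho_s(z)$, the Gaussian heat kernel, uniformly on the relevant scales, together with the corresponding statement for the bridge normalization. Plugging this into the rescaled sum, the $k$-th term becomes a Riemann sum over the simplex $0<s_1<\cdots<s_k<t$ (in $\mathbf{R}^k$) and over $z_1,\ldots,z_k\in\mathbf{R}$, which converges to the $k$-fold Wiener--It\^o integral
\begin{eqnarray*}
 I_k = \int_{\Delta_k(t)}\int_{\mathbf{R}^k} \rho_{s_1}(z_1)\,\rho_{s_2-s_1}(z_2-z_1)\cdots \rho_{t-s_k}(x-z_k)\, \wh(ds_1\,dz_1)\cdots \wh(ds_k\,dz_k),
\end{eqnarray*}
against the space-time white noise $\wh$. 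The factor $N^{-k/4}\cdot N^{k/4}$ coming from $\beta_N^k$ against the white-noise normalization $(\sqrt{N}\,\Delta s\,\Delta z)^{1/2}$ per increment is exactly what makes the scaling work, which is the reason the critical exponent is $\beta_N \sim N^{-1/4}$ in $1+1$ dimensions. One then recognizes $\sum_k I_k$ as precisely the chaos expansion of the mild solution of \eqref{she-delta1}--\eqref{she-delta2}; comparing the heat kernels $\rho$ (variance $s$) here with the SHE convention (variance $s/2$, i.e.\ generator $\tfrac12\Delta$) and tracking the Gaussian prefactor from the bridge-to-free-kernel ratio produces the explicit constant $\sqrt{4\pi}\,e^{x^2/(4t)}$ and the time change $t\mapsto 2t$.

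To upgrade term-by-term convergence to convergence in law of the full sum, I would establish a uniform bound on the $L^2(\p)$ norm of the tail: $\sum_{k>K}\esp\, |(\text{$k$-th term})|^2 \le \sum_{k>K} C^k t^{k/2}/\sqrt{k!}$, uniformly in $N$, using orthogonality of distinct chaoses and the $\ell^2$ contraction estimate $\sum_y p_i(y)^2 \le C/\sqrt i$ for the random walk; this makes the tail summable and small uniformly in $N$ and $K$, and, combined with the convergence of each fixed-$k$ term (which also gives convergence of the corresponding finite sums jointly), yields $L^2(\p)$ convergence of $\she_N(t,x)$, hence convergence in law. The process-level statement in $t$ and $x$ follows from the same expansion by checking tightness — e.g.\ via a Kolmogorov-type moment bound on increments $\she_N(t,x)-\she_N(t',x')$, again estimated chaos by chaos using regularity of the heat kernel — together with convergence of finite-dimensional distributions, which comes from the same local-CLT computation applied with several space-time marginals.

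The main obstacle is the uniformity of the local central limit theorem at the edges of the summation region: when consecutive times $i_{j+1}-i_j$ are small (of order $1$ rather than order $N$), the approximation $\sqrt N\, p_{i}(\sqrt N z)\approx \rho_{i/N}(z)$ fails pointwise, and one must control the contribution of those near-diagonal terms separately. The resolution is to show these terms are negligible in $L^2(\p)$: their contribution to $\esp|\cdot|^2$ is bounded using $\sum_y p_i(y)^2\le C\min(1,i^{-1/2})$, which near the diagonal produces an integrable (in the Riemann-sum sense) but vanishing-measure contribution as $N\to\infty$. Parity issues of the simple random walk (periodicity mod $2$) require the usual factor-of-two bookkeeping but cause no essential difficulty. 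I would also need the standard fact that the white-noise chaos expansion of the solution of \eqref{she-delta1}--\eqref{she-delta2} converges in $L^2$, which is classical (Walsh) once one has the heat-kernel bounds above.
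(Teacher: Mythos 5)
The paper does not prove Theorem~\ref{AKQ}: it is imported verbatim from Alberts--Khanin--Quastel~\cite{AKQ} and used as a black box, so there is no ``paper's own proof'' to compare against. Evaluated as a reconstruction of the original AKQ argument, your outline is essentially correct: the chaos expansion of the normalized point-to-point partition function, the local central limit theorem applied both to the one-step kernels and to the bridge normalization, the identification of the rescaled discrete multilinear sums with iterated Wiener--It\^o integrals against space-time white noise, a uniform-in-$N$ $L^2$ bound on the chaos tails via $\sum_y p_i(y)^2\le C\, i^{-1/2}$, and separate control of near-diagonal terms where the local CLT is not uniform are precisely the ingredients AKQ use, and your explanation of why $\beta_N\sim N^{-1/4}$ is the critical scaling is the right one. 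One small remark on the expansion itself: AKQ do not literally use the Hermite (Wick) expansion $e^{\beta\omega}=\sum_k \tfrac{\beta^k}{k!}:\!\omega^k\!:$ (which in any case carries an extra factor $e^{\beta^2/2}$ absorbed by the normalization); they expand $Z_N/\esp Z_N = E\bigl[\prod_i\bigl(1+\zeta(i,S_i)\bigr)\bigr]$ in the centred, normalized variables $\zeta(i,x)=\bigl(e^{\beta\omega(i,x)}-\lambda(\beta)\bigr)/\lambda(\beta)$, $\lambda(\beta)=\esp\, e^{\beta\omega}$. That polynomial chaos expansion works for any environment with finite exponential moments, not only Gaussian, and coincides with the Hermite picture to leading order in the intermediate-disorder regime; since the present paper only uses the Gaussian case, your route is adequate here. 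Your treatment of tightness for the process-level statement and the parity bookkeeping are also consistent with what AKQ actually do.
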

\subsection{Gaussian concentration} We borrow the following from \cite{T} (Lemma 2.2.11). Let
$d(\cdot,\cdot)$ denote the euclidean distance.
\begin{theorem}[Talagrand] 
 Let $\bg$ be an $\re^m$-valued Gaussian vector with covariance matrix $I$, the identity matrix in
$\re^m$. Then, for any measurable set $A\subset \re^m$, if $\p[\, \bg \in A\,]\geq c>0$, then, for
any $u>0$,
\begin{eqnarray}
 \p\left[ d(\bg,A) > u + \sqrt{2\log(1/c)}\,\right]\leq e^{-\tfrac{u^2}{2}}.
\end{eqnarray}
\end{theorem}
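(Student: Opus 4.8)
The plan is to deduce this from the Gaussian isoperimetric inequality, which I would take as the one non-elementary input. Write $\gamma_m$ for the law of $\bg$, that is, the standard Gaussian measure on $\re^m$; let $\Phi$ be the standard normal distribution function; and for a Borel set $A\subset\re^m$ and $t\ge 0$ put $A_t=\{x\in\re^m:\ d(x,A)\le t\}$. The isoperimetric inequality asserts that
\begin{equation*}
 \gamma_m(A_t)\ \ge\ \Phi\!\bigl(\Phi^{-1}(\gamma_m(A))+t\bigr),\qquad t\ge 0,
\end{equation*}
i.e.\ that half-spaces minimise the Gaussian measure of every enlargement among sets of a prescribed measure.

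Given this, the argument is short. The elementary step is the estimate $\Phi^{-1}(c)\ge-\sqrt{2\log(1/c)}$ for $0<c\le 1$: setting $a=\sqrt{2\log(1/c)}\ge 0$, this is equivalent to $\Phi(-a)\le e^{-a^2/2}$, the classical Gaussian tail bound (to see it, note that $a\mapsto e^{-a^2/2}-\Phi(-a)$ equals $1/2$ at $0$, has derivative $e^{-a^2/2}(1/\sqrt{2\pi}-a)$, hence increases then decreases on $[0,\infty)$, and tends to $0$ at infinity, so it stays positive). Now take $A$ measurable with $\gamma_m(A)=\p[\bg\in A]\ge c$, and apply the isoperimetric inequality with $t=u+\sqrt{2\log(1/c)}$; by monotonicity of $\Phi$ together with the estimate just proved,
\begin{equation*}
 \gamma_m\!\bigl(A_{\,u+\sqrt{2\log(1/c)}}\bigr)\ \ge\ \Phi\!\bigl(\Phi^{-1}(c)+u+\sqrt{2\log(1/c)}\bigr)\ \ge\ \Phi(u).
\end{equation*}
Since $A_{\,u+\sqrt{2\log(1/c)}}$ is closed with complement $\{x:\ d(x,A)>u+\sqrt{2\log(1/c)}\}$, and $1-\Phi(u)=\Phi(-u)\le e^{-u^2/2}$ by the same tail bound, this gives
\begin{equation*}
 \p\!\bigl[\,d(\bg,A)>u+\sqrt{2\log(1/c)}\,\bigr]\ =\ 1-\gamma_m\!\bigl(A_{\,u+\sqrt{2\log(1/c)}}\bigr)\ \le\ 1-\Phi(u)\ \le\ e^{-u^2/2},
\end{equation*}
as claimed.

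The only genuine obstacle is the Gaussian isoperimetric inequality itself; everything else is bookkeeping with $\Phi$. If a self-contained proof were wanted, I would obtain it either by transferring the isoperimetric inequality on the sphere $\sqrt{n}\,S^{n-1}$ to $\re^m$ via Poincar\'e's limit (the pushforwards to $\re^m$ of the uniform measures on $\sqrt{n}\,S^{n-1}$ converge to $\gamma_m$), or by establishing Bobkov's infinitesimal functional inequality and integrating it along the Ornstein--Uhlenbeck semigroup. A variant that avoids isoperimetry altogether uses only dimension-free concentration of Lipschitz functions of $\bg$: the map $f(x)=d(x,A)$ is $1$-Lipschitz and vanishes on $A$, so $\p[f(\bg)=0]\ge c$; the lower-tail bound $\p[f(\bg)\le \E f(\bg)-u]\le e^{-u^2/2}$ evaluated at $u=\E f(\bg)$ forces $\E f(\bg)\le\sqrt{2\log(1/c)}$, after which the upper-tail bound $\p[f(\bg)>\E f(\bg)+u]\le e^{-u^2/2}$ yields precisely the asserted inequality.
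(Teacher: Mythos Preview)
Your argument is correct. The deduction from the Gaussian isoperimetric inequality is clean, and the elementary tail bound $\Phi(-a)\le e^{-a^2/2}$ is verified properly. The alternative route via Lipschitz concentration that you sketch at the end is also valid and is in fact closer in spirit to how Talagrand presents this in the cited reference.

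There is nothing to compare against, however: the paper does not prove this theorem. It is quoted verbatim from Talagrand's book (Lemma 2.2.11 of \cite{T}) and used as a black box in the proof of Theorem~\ref{main-th}. So your proposal supplies a proof where the paper supplies only a citation.
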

\noindent The distance appears naturally when we compare the partition function over different
environments. First, define the polymer measure in a fixed environment $\om$ by
\begin{eqnarray}
  \langle F(S)\rangle_{N,\om,x}&=& \frac{1}{Z_N(\om,\beta,x)} E\left[ F(S)\,e^{\beta \sum^N_{i=1} \om(i,S_i)}|\, S_N=x\sqrt{N}\right].
\end{eqnarray}
We will denote $\langle F(S)\rangle_{N,\om}=\langle
F(S)\rangle_{N,\om,x}$ when no confusion is possible.
Denote the expected value over two independent copies of the polymer in the same environment by
$\langle \cdot \rangle^{(2)}_{N,\ba, x}$ and, for two paths $S^{(1)}$ and $S^{(2)}$, let
$L_N(S^{(1)},S^{(2)})=\sum^N_{t=1} {\bf 1}_{S^{(1)}_t=S^{(2)}_t}$ be the overlap. 
Let $d_N(\ba,\ba')$ denote the euclidean distance between two environments $\ba$ and $\ba'$ when
they are considered as vectors with coordinates in the cone $\{(t,x):\, 0\leq t \leq N, |x|\leq
t\}$.
The proof of next Lemma can be found in Carmona-Hu \cite{CH}, page 443, as part of the proof of their Theorem
$1.5$.
\begin{lemma}\label{two-environments}
Let $\ba$ and $\ba'$ be two environments. Then,
\begin{eqnarray}
\log Z_N(\ba',\beta,x)\geq \log Z_N(\ba,\beta,x) - \beta \, d_N(\ba,\ba') \sqrt{\langle L_N(S^{(1)},S^{(2)})\rangle^{(2)}_{N,\ba,x}}.
\end{eqnarray}
\end{lemma}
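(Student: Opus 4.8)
The plan is to compare the two partition functions by moving from $\ba$ to $\ba'$ along the straight line segment in environment space, and to control the derivative of $\log Z_N$ along this path using the fluctuation-dissipation structure of the polymer measure. Concretely, set $\ba_s = (1-s)\ba + s\ba'$ for $s \in [0,1]$ and write
\begin{eqnarray*}
\log Z_N(\ba',\beta,x) - \log Z_N(\ba,\beta,x) = \int_0^1 \frac{d}{ds} \log Z_N(\ba_s,\beta,x)\, ds.
\end{eqnarray*}
Differentiating under the expectation defining $Z_N$, the $s$-derivative of $\log Z_N(\ba_s,\beta,x)$ equals $\beta \sum_{(i,y)} (\ba'(i,y) - \ba(i,y))\, \langle {\bf 1}_{S_i = y} \rangle_{N,\ba_s,x}$, i.e. $\beta$ times the inner product of the displacement vector $\ba' - \ba$ with the vector of occupation probabilities $p_s(i,y) := \langle {\bf 1}_{S_i=y}\rangle_{N,\ba_s,x}$ (restricted to the cone). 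By Cauchy--Schwarz this is bounded below by $-\beta\, d_N(\ba,\ba')\, \|p_s\|_2$.

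The next step is to identify $\|p_s\|_2^2$ with an overlap expectation. Since $\sum_{(i,y)} p_s(i,y)^2 = \sum_{i=1}^N \sum_y \langle {\bf 1}_{S^{(1)}_i = y}\rangle \langle {\bf 1}_{S^{(2)}_i=y}\rangle = \sum_{i=1}^N \langle {\bf 1}_{S^{(1)}_i = S^{(2)}_i}\rangle^{(2)}_{N,\ba_s,x} = \langle L_N(S^{(1)},S^{(2)})\rangle^{(2)}_{N,\ba_s,x}$, using independence of the two replicas in the same environment, we get $\|p_s\|_2 = \sqrt{\langle L_N(S^{(1)},S^{(2)})\rangle^{(2)}_{N,\ba_s,x}}$. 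Plugging this into the integral gives
\begin{eqnarray*}
\log Z_N(\ba',\beta,x) - \log Z_N(\ba,\beta,x) \geq -\beta\, d_N(\ba,\ba') \int_0^1 \sqrt{\langle L_N(S^{(1)},S^{(2)})\rangle^{(2)}_{N,\ba_s,x}}\, ds.
\end{eqnarray*}

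It remains to replace the integral over the interpolated environments by the single value at $\ba$. The clean way is to observe that $L_N \leq N$ always, but that is too lossy; instead I expect the right move is a monotonicity or convexity argument showing $s \mapsto \langle L_N \rangle^{(2)}_{N,\ba_s,x}$ is nonincreasing in $s$, or more simply to just bound the integrand by its value at $s=0$ after noting that the relevant quantity is maximized there — alternatively one reparametrizes so that the reference environment is always $\ba$. Looking at the Carmona--Hu argument, the honest route is that the bound as stated with $\langle L_N\rangle_{N,\ba}$ follows because one can equally interpolate keeping $\ba$ fixed and estimating the overlap under $\ba$ directly; I would reproduce their observation that the quadratic form controlling the second derivative only decreases the overlap, so the worst case is at the endpoint $\ba$. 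This last interchange — justifying that the overlap under the tilted environments $\ba_s$ can be dominated by the overlap under $\ba$ itself — is the main obstacle, and is where I expect to have to invoke the precise convexity structure of $\log Z_N$ as a function of the environment (it is convex, since it is a log of a sum of exponentials linear in $\om$), which forces the occupation-measure $\ell^2$-norm to behave monotonically along the segment.
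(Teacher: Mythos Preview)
Your interpolation and the identification $\|p_s\|_2^2=\langle L_N\rangle^{(2)}_{N,\ba_s,x}$ are fine, but the final step is a genuine gap. Convexity of $\om\mapsto\log Z_N(\om,\beta,x)$ does \emph{not} force the occupation-measure $\ell^2$-norm $\|p_s\|_2$ to be monotone along the segment. What convexity gives you is monotonicity of the \emph{directional derivative} $s\mapsto \beta\sum_{(i,y)}(\ba'-\ba)(i,y)\,p_s(i,y)$; once you apply Cauchy--Schwarz pointwise in $s$ you have discarded the direction of $\ba'-\ba$ and are left only with $\|p_s\|_2$, which carries no such monotonicity. For instance, if $\ba=0$ the reference overlap is the annealed one $E^{(2)}_{x,N}[L_N]$, and for a generic $\ba'$ the quenched overlaps $\langle L_N\rangle^{(2)}_{N,\ba_s,x}$ along the path will typically exceed it, so the inequality you want simply fails.

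The fix --- which is exactly the Carmona--Hu argument the paper cites --- is to swap the order of the two operations: use convexity (equivalently, Jensen) \emph{before} Cauchy--Schwarz, not after. Writing
\[
\log Z_N(\ba',\beta,x)-\log Z_N(\ba,\beta,x)=\log\big\langle \exp\big\{\beta\textstyle\sum_i(\ba'(i,S_i)-\ba(i,S_i))\big\}\big\rangle_{N,\ba,x}
\;\geq\;\beta\,\big\langle \textstyle\sum_i(\ba'(i,S_i)-\ba(i,S_i))\big\rangle_{N,\ba,x}
\]
by Jensen (or, in your language, $\int_0^1 \frac{d}{ds}\log Z_N(\ba_s)\,ds\geq \frac{d}{ds}\big|_{s=0}\log Z_N(\ba_s)$ by convexity), you land on the linear functional at the single environment $\ba$. \emph{Then} apply Cauchy--Schwarz once, together with your identity $\sum_{(i,y)}p_0(i,y)^2=\langle L_N\rangle^{(2)}_{N,\ba,x}$, and the lemma follows immediately. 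No monotonicity of the overlap is needed.
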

\section{Proof of Theorem \ref{main-th}}\label{proof}
Fix $x$ and let $E^{(2)}_{x,N}$ denote the expected value with respect to two independent walks of length $N$ conditioned to end at $x\sqrt{N}$. 
Define the event
\begin{eqnarray}
 A=\left\{ \ba:\, Z_N(\ba,\beta,x)\geq \frac12 \esp Z_N(\beta,x),\,  \langle L_N(S^{(1)},S^{(2)})\rangle^{(2)}_{N,\ba,x} \leq C\sqrt{N}\right\},
\end{eqnarray}
Versions of the following Lemma for fixed $\beta$ can be found in \cite{T}, Lemma 2.2.9, for spin glasses,
and in \cite{CH}, proof of Theorem $1.5$, for directed polymers.
\begin{lemma}\label{positive-A}
Take $\beta=N^{-1/4}$. For $C>0$ large enough, there exists $\delta>0$ such that $\p[A]\geq \delta,\, \forall \, N\geq 1$. Furthermore, $\delta$ can be taken uniformly bounded away from $0$ for $x$ in a compact set.
\end{lemma}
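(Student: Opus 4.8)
The plan is to show that both defining constraints of $A$ hold simultaneously with probability bounded below, using a second-moment / Paley--Zygmund argument for the first constraint and a direct expectation bound for the overlap. First I would control $\esp \langle L_N(S^{(1)},S^{(2)})\rangle^{(2)}_{N,\ba,x}$. Writing the bracket out and taking $\esp$ over the Gaussian environment, one reduces (after using the Gaussian moment-generating function $\esp e^{\beta(\om(i,S^{(1)}_i)+\om(i,S^{(2)}_i))}=e^{\beta^2(1+\uno{S^{(1)}_i=S^{(2)}_i})}$ and the relation $\esp Z_N(\beta,x)=e^{N\beta^2/2}$) to an expression of the form $E^{(2)}_{x,N}\!\left[L_N(S^{(1)},S^{(2)})\,e^{\beta^2 L_N(S^{(1)},S^{(2)})}\right]/E^{(2)}_{x,N}\!\left[e^{\beta^2 L_N(S^{(1)},S^{(2)})}\right]$, which is the expectation of the overlap under a size-biased two-replica random-walk measure. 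With $\beta^2 = N^{-1/2}$ and the classical fact that for two conditioned simple random walks $\esp_{x,N}^{(2)} L_N \sim c\sqrt{N}$ (the number of coincidences of a difference walk before time $N$), together with the boundedness of the exponential tilt for $\beta^2 L_N = O(1)$ in expectation, one gets $\esp \langle L_N\rangle^{(2)}_{N,\ba,x} \le C_0\sqrt{N}$ for a universal $C_0$, uniformly for $x$ in a compact set. Markov's inequality then gives $\p\big[\langle L_N\rangle^{(2)}_{N,\ba,x} > C\sqrt{N}\big] \le C_0/C$, which is $\le 1/4$ once $C \ge 4C_0$.

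Next I would handle the first constraint $Z_N \ge \tfrac12 \esp Z_N$. Here the standard route is Paley--Zygmund: one needs $\esp Z_N^2 \le K\,(\esp Z_N)^2$ with $K$ uniform in $N$. Expanding $\esp Z_N^2$ over two independent walks gives exactly $(\esp Z_N)^2 \cdot E^{(2)}_{x,N} e^{\beta^2 L_N(S^{(1)},S^{(2)})}$, so the whole problem is to bound $E^{(2)}_{x,N} e^{\beta^2 L_N}$ by a constant. Since $\beta^2 = N^{-1/2}$ and $L_N$ is, up to a constant, the local time at $0$ of a rate-$1$ random walk run for time $N$ — which has fluctuations of order $\sqrt{N}$ but exponential moments $\esp e^{\lambda L_N/\sqrt N}$ bounded uniformly in $N$ for small enough $\lambda$ (this is where the $N^{-1/4}$ scaling of $\beta$ is exactly calibrated) — the bound $E^{(2)}_{x,N} e^{\beta^2 L_N} \le K$ holds, again uniformly for $x$ in a compact set. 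Paley--Zygmund then yields $\p\big[Z_N \ge \tfrac12\esp Z_N\big] \ge \tfrac14 (\esp Z_N)^2/\esp Z_N^2 \ge \tfrac{1}{4K} =: 2\delta_0 > 0$.

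Finally I would combine the two: $\p[A] \ge \p\big[Z_N \ge \tfrac12\esp Z_N\big] - \p\big[\langle L_N\rangle^{(2)}_{N,\ba,x} > C\sqrt N\big] \ge 2\delta_0 - 1/4$, and by taking $C$ large (so the overlap term is, say, $\le \delta_0$) we get $\p[A] \ge \delta_0 > 0$ for all $N\ge 1$. The uniformity over $x$ in a compact set is inherited from the uniformity of the two random-walk estimates above, since the conditioned-walk local-time moments depend on $x\sqrt N$ only through the endpoint and are controlled by a local CLT uniformly over $|x| \le R$; I would relegate the precise local-limit-theorem bookkeeping to the appendix as the paper announces. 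The main obstacle is the uniform-in-$N$ bound $E^{(2)}_{x,N} e^{\beta^2 L_N} \le K$: one must show the exponential moment of the (properly normalized) overlap of two simple random walks conditioned to a common endpoint does not blow up, which requires quantitative control — via a local central limit theorem — of the probability that the difference walk returns to $0$ at a given time while both walks still hit the prescribed endpoint, rather than the cruder unconditioned estimate. Everything else is either a Gaussian computation or an application of Paley--Zygmund and Markov.
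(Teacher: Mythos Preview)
Your proposal has a genuine gap in the very first step. You claim that taking the environment expectation of the Gibbs average $\langle L_N\rangle^{(2)}_{N,\ba,x}$ yields the expression $E^{(2)}_{x,N}\!\big[L_N e^{\beta^2 L_N}\big]\big/E^{(2)}_{x,N}\!\big[e^{\beta^2 L_N}\big]$, but this is the \emph{ratio of expectations}, not the \emph{expectation of the ratio}. The Gibbs average is
\[
\langle L_N\rangle^{(2)}_{N,\ba,x}=\frac{E^{(2)}_{x,N}\!\left[L_N\,e^{\beta H_N}\right]}{Z_N(\ba,\beta,x)^2},
\]
and both numerator and denominator are random in $\ba$; there is no reason for $\esp$ to commute with the quotient. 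In particular the random denominator $Z_N^2$ can be small, and without further input you have no control on $\esp\langle L_N\rangle^{(2)}$ --- bounding negative moments of $Z_N$ is precisely what the whole lemma is a step toward, so any attempt to patch this via Cauchy--Schwarz would be circular.

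The paper's proof circumvents exactly this obstacle. Rather than applying Markov to $\langle L_N\rangle^{(2)}$ itself, it rewrites the overlap constraint as $E^{(2)}_{x,N}\!\big[L_N e^{\beta H_N}\big]\le C\sqrt{N}\,Z_N^2$ and then uses the \emph{other} event defining $A$, namely $Z_N\ge\tfrac12\,\esp Z_N$, to replace the random $Z_N^2$ by the deterministic $\tfrac14(\esp Z_N)^2$. Only after this replacement is Markov applied --- to the numerator alone, whose $\esp$-expectation equals $(\esp Z_N)^2\,E^{(2)}_{x,N}\!\big[L_N e^{\beta^2 L_N}\big]$ and is controlled by the overlap estimate (\ref{overlap2}). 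Your Paley--Zygmund step and your final union bound are the same as the paper's, but the decoupling of the random denominator via the first event is the key idea you are missing.
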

\begin{proof} 
The key to prove this fact is the estimate (\ref{overlap2}) proved in Section \ref{estimates-overlap}. Let $H_N(S^{(1)},S^{(2)})=\sum^N_{t=1} \om(t,S^{(1)}_t)+\om(t,S^{(2)}_t)$.
\begin{eqnarray}\nonumber
 \p[A] &=& \p\left\{ Z_N(\beta,x)\geq \frac12 \esp Z_N(\beta,x),\,  E^{(2)}_{x,N}\left[ L_N(S^{(1)},S^{(2)}) \exp\{ \beta
H_N(S^{(1)},S^{(2)})\}\right] \leq C\sqrt{N} Z_N(\beta,x)^2 \right\}\\
 &\geq& \p\left\{ Z_N(\beta,x)\geq \frac12 \esp Z_N(\beta,x),\,  E^{(2)}_{x,N}\left[ L_N(S^{(1)},S^{(2)}) \exp\{ \beta
H_N(S^{(1)},S^{(2)})\}\right] \leq \frac{C}{4}\sqrt{N} \left(\esp Z_N(\beta,x)\right)^2 \right\}\\
 &\geq& \p\left\{ Z_N(\beta)\geq \frac12 \esp Z_N(\beta)\right\} \\
 && \quad -\,\p\left\{E^{(2)}_{x,N}\left[ L_N(S^{(1)},S^{(2)}) \exp\{ \beta H_N(S^{(1)},S^{(2)})\}\right] > \frac{C}{4}\sqrt{N} \left(\esp Z_N(\beta)\right)^2 \right\}
\end{eqnarray}
We treat the first summand: by Paley-Zygmund's inequality (see for example \cite{T}, Proposition 2.2.3),
\begin{eqnarray}
\p\left\{ Z_N(\beta,x)\geq \frac12 \esp Z_N(\beta,x)\right\} \geq \frac14 \frac{\left( \esp Z_N(\beta,x)\right)^2}{\esp Z_N(\beta,x)^2} = \frac{1}{4}\frac{1}{\esp \she^2_N(1,x)},
\end{eqnarray}
if we take $\beta=N^{-1/4}$. Now, by an application of Fubini's theorem together with $\esp e^{\beta \om}=e^{\beta^2/2}$ (remember $\om$ is a standard normal random variable), we have $\esp \she^2_N(1,x)=E^{(2)}_{x,N}[\exp N^{-1/2}L_N(S^{(1)},S^{(2)})]$. The estimate (\ref{overlap1}) then provides a constant $0<L<+\infty$ such that
 \begin{eqnarray}
   \esp \she^2_N(1,x) \leq L,\quad \, \forall \, N\geq 1.
 \end{eqnarray}
 This gives
 \begin{eqnarray}
\p\left\{ Z_N(\beta,x)\geq \frac12 \esp Z_N(\beta,x)\right\} \geq  \frac{1}{4L},\quad \forall \, N\geq 1 \quad \text{when} \quad \beta=N^{-1/4}.
\end{eqnarray}
For the second summand above, using Chebyshev followed by Fubini
\begin{eqnarray}
&&\p\left\{ E^{(2)}_{x,N}\left[ L_N(S^{(1)},S^{(2)}) \exp\{ N^{-1/4}H_N(S^{(1)},S^{(2)})\}\right] > \frac{C}{4}\sqrt{N} \left(\esp Z_N(\beta,x)\right)^2\right\}\\
&& \phantom{blabla}\leq \frac{4}{C\sqrt{N}\left(\esp Z_N(\beta,x)\right)^2} \esp E^{(2)}_{x,N}\left[ L_N(S^{(1)},S^{(2)}) \exp\{ N^{-1/4}H_N(S^{(1)},S^{(2)})\}\right]\\
&& \phantom{blabla}= \frac{4}{C\sqrt{N}} E^{(2)}_{x,N}\left[ L_N(S^{(1)},S^{(2)}) \exp\{ N^{-1/2}L_N(S^{(1)},S^{(2)})\}\right]\\
&& \phantom{blabla}\leq \frac{4 K}{C},
\end{eqnarray}
for some $K>0$, thanks to (\ref{overlap2}), where we also used
\begin{eqnarray}\nonumber
\esp E^{(2)}_{x,N}\left[ L_N(S^{(1)},S^{(2)}) \exp\{ N^{-1/4}H_N(S^{(1)},S^{(2)})\}\right]
= \left(\esp Z_N(\beta,x)\right)^2 \, E^{(2)}_{x,N}\left[ L_N(S^{(1)},S^{(2)}) \exp\{ N^{-1/2}L_N(S^{(1)},S^{(2)})\}\right].
\end{eqnarray}
Overall, we have $\p[A] \geq \frac{1}{4L} - \frac{4 K}{C}=:\delta$, which is positive provided we choose $C$ large enough. Finally, note that the constants $L$ and $K$ can be chosen uniformly bounded for $x$ in a compact set.
\end{proof} 

\begin{proof}[Proof of Theorem \ref{main-th}- $a)$] Recall the distance $d_N(\cdot,\cdot)$ from Lemma \ref{two-environments}. By Lemma \ref{positive-A} and Talagrand's theorem,
\begin{eqnarray}
 \p\left[\bg:\, d_N(\bg, A)>u+C'\right] \leq e^{-u^2/2}, 
\end{eqnarray}
for all $u>0$ and some explicit constant $0<C'<+\infty$ depending on $C,\, K$ and $L$. 
In particular, for any $\ba' \in A$, if $\bg$ is any environment, by Lemma \ref{two-environments},
\begin{eqnarray}
 \log Z_N(\bg,\beta,x) &\geq& \log \esp Z_N(\beta,x)-\log 2 - \beta \, d_N(\ba,\bg') \sqrt{\langle L_N(S^{(1)},S^{(2)})\rangle^{(2)}_{N,\ba'}}\\
 &\geq& \log \esp Z_N(\beta,x)-\log 2 - \beta N^{1/4}\sqrt{C} d_N(\ba,\bg'),\\
  &\geq& \log \esp Z_N(\beta,x)-\log 2 - C'' d_N(\ba,\bg'),
\end{eqnarray}
for some $0<C''<+\infty$, if $\beta=N^{-1/4}$. As a consequence, if $\log Z_N(\bg,\beta,x) \leq \log \esp Z_N(\beta,x)-c_2 u-c_1$, then
$$\log \esp Z_N(\beta,x)-\log 2 - C'' d_N(\ba,\bg')\leq \log \esp Z_N(\beta,x)-c_2 u-c_1.$$
Taking $c_2=C''$ and $c_1=\log 2 + C'C''$, this in turns implies that $d(\bg,\bg')\geq u+C'$ for all $\bg'\in A$ and
\begin{eqnarray}
&& \p\left[ \log Z_N(\bg,\beta,x) \leq \log \esp Z_N(\beta,x)-c_2 u-c_1\right] \,\,  \leq \,\, \p\left[ d_N(\bg,A) \geq u+C'\right] \,\, \leq \,\, e^{-u^2/2}.
\end{eqnarray}
This proves the following intermediate result: for all $u>0$, $N\geq 1$, (remember $\she_N(1,x) = Z_N(x)/\esp Z_N(x)$)
 \begin{eqnarray}
  \p\left[ \she_N(1,x) < C_2e^{-c_2u}\right]\leq e^{-u^2/2}.
 \end{eqnarray}
with $C_2=e^{-c_1}$. Using that $\she_N(1,x) \to \sqrt{4\pi } e^{x^2/4}\she(2,x)$ in law, we get
 \begin{eqnarray}\label{she-tail}
  \p\left[ \she(2,x) < C_2 (4 \pi)^{-1/2} e^{-x^2/4}e^{-c_2u}\right]\leq e^{-u^2/2}.
 \end{eqnarray}
 for all $u>0$. This proves Theorem \ref{main-th}-$a)$ when $t=2$. If we take the length of the polymer to be $tN$, the proof is unchanged, and the estimates of Section \ref{estimates-overlap} imply that the constants $C'$ and $C''$ above are uniformly bounded for $(t,x)$ in a compact set.
\end{proof}
 \begin{proof}[Proof of Theorem \ref{main-th}- $b)$]
 We will use the following standard estimate: for any $p>1$ and any compact set $K$, there exists a constant $C_K>0$ such that
 \begin{eqnarray}\label{she-lip-space}
  \esp |\she(t,x)-\she(s,y)|^ p\leq C_K \left( |x-y|^ {p/2} + |t-s|^{p/4} \right), \quad \forall \, (t,x)\in K.
 \end{eqnarray}
 See for example (135) in \cite{Kho}. 
As $\she$ is continuous, the only possible singularities of $\she^{-1}$ correspond to zeros of $\she$. We will show that $\she(\cdot,\cdot)^{-1}$ has a continuous modification as well. We estimate
 \begin{eqnarray*}
 \esp |\she(t,x)^{-1}-\she(s,y)^{-1}|^M&=&\esp \left|\frac{\she(t,x)-\she(s,y)}{\she(t,x)\she(s,y)}\right|^M\\
 &\leq& \esp\left[ |\she(t,x)-\she(s,y)|^{2M}\right]^{1/2}\esp\left[ \she(t,x)^{-4M}\right]^{1/4}\esp\left[ \she(s,y)^{-4M}\right]^{1/4}.
\end{eqnarray*}
By (\ref{moments}), the moments of order $-4M$ are locally bounded. Together with (\ref{she-lip-space}), we conclude that, for each compact $K \subset (0,+\infty)\times \re$, there is a constant $ \tilde C_K<+\infty$, such that
\begin{eqnarray}
 \sup_{(t,x),(s,y)\in K} \esp |\she(t,x)^{-1}-\she(s,y)^{-1}|^M < \tilde C_K \left(|x-y|^{M/2}+|t-s|^{M/4}\right).
\end{eqnarray}
Hence, by Kolmogorov criterion, $\{\she(t,x)^{-1}:\, (t,x)\in K\}$ has a continuous modification $\mathcal{Y}(\cdot,\cdot)$, and hence stays bounded. It follows that $\mathcal{Y}^{-1}$ cannot assume the value $0$ in $K$. This proves (\ref{all-x-positive}).
\end{proof}
\section{Appendix: Overlap Estimates}\label{estimates-overlap}
The goal of this section is to prove the needed overlap estimates. 
Recall that $L_N(S^{(1)},S^{(2)})=\sum^{N}_{i=1} {\bf 1}_{S^{(1)}_i=S^{(2)}_i}$ and denote by $P^{(2)}$ and $E^{(2)}$ the law and expectation of two independent simple random walks.
\begin{lemma} There is a locally bounded function $\kappa(t,x)\in (0,+\infty)$ such that
 \begin{eqnarray}\label{overlap1}
  \sup_{N\geq 1} E^{(2)}\left[e^{N^{-1/2}L_{tN}(S^{(1)},S^{(2)})} |S^{(1)}_{tN}=S^{(2)}_{tN}=x\sqrt{N}\right]&\leq& \kappa(t,x),\\
  \label{overlap2}
  \sup_{N\geq 1} \tfrac{1}{\sqrt{N}}E^{(2)}\left[L_{tN}(S^{(1)},S^{(2)})e^{N^{-1/2}L_{tN}(S^{(1)},S^{(2)})}|S^{(1)}_{tN}=S^{(2)}_{tN}=x\sqrt{N} \right]&\leq& \kappa(t,x).
 \end{eqnarray}
\end{lemma}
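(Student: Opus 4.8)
The plan is to reduce both estimates to a single sum over the local-overlap structure of two independent simple random walks bridges, and then to control that sum by a Gaussian local CLT. First I would observe that, writing $\ell_k = {\bf 1}_{S^{(1)}_k = S^{(2)}_k}$, the difference walk $D_k = S^{(1)}_k - S^{(2)}_k$ is (up to the usual parity considerations) a lazy random walk on $2\ze$, and $L_{tN} = \sum_k {\bf 1}_{D_k = 0}$ is its local time at $0$. Conditioning on $S^{(1)}_{tN}=S^{(2)}_{tN}=x\sqrt N$ forces $D_{tN}=0$, so the object of interest is the local time at $0$ of a bridge of the difference walk. The key quantitative input is the bridge local-CLT bound
\begin{eqnarray}\label{lclt-bridge}
 P^{(2)}\left[ S^{(1)}_k = S^{(2)}_k \;\middle|\; S^{(1)}_{tN}=S^{(2)}_{tN}=x\sqrt N \right] \;\leq\; \frac{C}{\sqrt{k\wedge(tN-k)}} + \frac{C}{\sqrt N},
\end{eqnarray}
valid uniformly for $k\in\{1,\dots,tN\}$, with $C=C(t,x)$ locally bounded; this follows from the standard uniform local limit theorem for the simple random walk (including at the endpoints via the ratio-limit / Gaussian bridge estimate) applied to the difference walk.

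Granting (\ref{lclt-bridge}), I would first bound the \emph{exponential moment} (\ref{overlap1}). The natural route is a moment expansion: for any $\lambda\ge 0$,
\begin{eqnarray*}
 E^{(2)}\!\left[e^{\lambda L_{tN}}\,\middle|\,\cdot\right] \;=\; \sum_{m\ge 0}\frac{\lambda^m}{m!}\,E^{(2)}\!\left[L_{tN}^{\,m}\,\middle|\,\cdot\right],
\end{eqnarray*}
and $E^{(2)}[L_{tN}^m\mid\cdot] = \sum_{k_1,\dots,k_m} P^{(2)}[D_{k_1}=\cdots=D_{k_m}=0\mid\cdot]$. Using the Markov property of the bridge and (\ref{lclt-bridge}) repeatedly one gets, after ordering $k_1\le\cdots\le k_m$ and summing the telescoping product of $1/\sqrt{\text{gap}}$ factors, a bound of the form $E^{(2)}[L_{tN}^m\mid\cdot]\le m!\,(C'\sqrt N)^m / \Gamma(m/2+1)$ or, more crudely but sufficiently, $E^{(2)}[L_{tN}^m\mid\cdot]\le (C'\sqrt N)^m\, m!$ with a constant independent of $N$ once we account for the $\sqrt N$ normalization inside $\lambda=N^{-1/2}$. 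Concretely, since each summand is at most a product of $m$ factors each $\lesssim 1/\sqrt{\text{gap}}$ and $\sum_{k=1}^{tN} k^{-1/2}\asymp \sqrt{tN}$, one obtains $N^{-m/2}E^{(2)}[L_{tN}^m\mid\cdot]\le (C'')^m$ uniformly in $N$, and hence the series $\sum_m (C'')^m/m!$ converges to $e^{C''}$, giving (\ref{overlap1}) with $\kappa(t,x)$ locally bounded. The local boundedness and positivity of $\kappa$ in $(t,x)$ are inherited from those of the constant $C(t,x)$ in (\ref{lclt-bridge}), which in turn come from the explicit Gaussian density $\tfrac{1}{\sqrt{4\pi t}}e^{-x^2/4t}$ appearing in the local CLT for the bridge of the difference walk.

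For (\ref{overlap2}), the trick is that $L_{tN}\,e^{\lambda L_{tN}} = \frac{d}{d\lambda}e^{\lambda L_{tN}}$, so by the same power-series computation
\begin{eqnarray*}
 N^{-1/2}E^{(2)}\!\left[L_{tN}\,e^{N^{-1/2}L_{tN}}\,\middle|\,\cdot\right] \;=\; N^{-1/2}\sum_{m\ge 1}\frac{m\,N^{-(m-1)/2}}{m!}\,E^{(2)}\!\left[L_{tN}^{\,m}\,\middle|\,\cdot\right] \;=\;\sum_{m\ge 1}\frac{N^{-m/2}}{(m-1)!}\,E^{(2)}\!\left[L_{tN}^{\,m}\,\middle|\,\cdot\right],
\end{eqnarray*}
which is again dominated by $\sum_{m\ge1}(C'')^m/(m-1)! = C''e^{C''}$, locally bounded in $(t,x)$; one then enlarges $\kappa(t,x)$ to majorize both right-hand sides. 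The main obstacle I anticipate is not the combinatorics of the moment sums — those are routine once the ordering trick is set up — but establishing the \emph{endpoint} part of (\ref{lclt-bridge}): the naive local CLT degrades near $k=0$ and $k=tN$, and one must show the bridge conditioning does not create an anomalously large local time there. This is handled by the standard ballot-type / reflection estimate $P[D_{tN}=0]\asymp N^{-1/2}$ together with $P[D_k=0, D_{tN}=0]\le P[D_k=0]\,\sup_y P[D_{tN-k}=y] \lesssim (k\wedge(tN-k))^{-1/2} N^{-1/2}$, so dividing by $P[D_{tN}=0]\asymp N^{-1/2}$ recovers (\ref{lclt-bridge}); I would relegate this computation, along with the precise tracking of the $(t,x)$-dependence of constants, to the detailed proof.
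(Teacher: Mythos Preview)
Your moment-expansion strategy is sound in outline and genuinely different from the paper's route, but the bookkeeping in the key step is wrong. From the ordered sum with gaps $g_j=k_j-k_{j-1}$ and the local-CLT input $\le C\,g_j^{-1/2}$ you get
\[
E^{(2)}\bigl[L_{tN}^{\,m}\,\big|\,\cdot\,\bigr]\;\le\; m!\sum_{g_1+\cdots+g_m\le tN}\ \prod_{j=1}^m C\, g_j^{-1/2}.
\]
Your ``crude'' estimate replaces the constrained sum by the product of $m$ free sums $\sum_{g\le tN}g^{-1/2}\sim 2\sqrt{tN}$, yielding $m!\,(C'\sqrt N)^m$; plugging this into $\sum_m N^{-m/2}E[L^m]/m!$ gives the \emph{geometric} series $\sum_m (C')^m$, which diverges since there is no reason for $C'<1$. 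The subsequent claim $N^{-m/2}E[L_{tN}^m\mid\cdot]\le (C'')^m$ is simply false: already for the unconditioned walk $L_N/\sqrt N$ converges in law to $|B_1|$, so $E[(L_N/\sqrt N)^m]$ grows like $(m/e)^{m/2}$, not like $(C'')^m$. What does work is the refined Dirichlet-integral bound you mention first, $\sum_{\sum g_j\le N}\prod g_j^{-1/2}\le \pi^{m/2}N^{m/2}/\Gamma(m/2+1)$, giving the convergent series $\sum_m C^m/\Gamma(m/2+1)$; you must actually use that one. A smaller issue: under the conditioning $S^{(1)}_{tN}=S^{(2)}_{tN}=x\sqrt N$ the difference walk $D$ is \emph{not} Markov by itself (only the pair $(S^{(1)},S^{(2)})$ is), so ``Markov property of the bridge applied repeatedly'' has to be run through the pair process and the telescoping of bridge kernels made explicit.

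For comparison, the paper avoids both difficulties. It first reduces by convexity to the overlap up to time $N/2$, then uses the local limit theorem to show $P^{(2)}[L_{N/2}=k\mid S^{(1)}_N=S^{(2)}_N=x\sqrt N]\le Ce^{x^2}P^{(2)}[L_{N/2}=k]$, thereby passing to the \emph{unconditioned} local time. This is the partition function of a homogeneous pinning model, for which the paper quotes the ready-made free-energy bound $z_m(\beta)\le c_1 e^{c_2\beta^2 m}$ from Giacomin's book and sets $\beta=N^{-1/2}$. The second estimate then follows from the first via the convexity inequality $g'(u)\le (g(2u)-1)/u$ rather than by differentiating a power series. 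The paper's route is shorter because it outsources the hard estimate to the pinning literature; yours is more self-contained but requires getting the Dirichlet combinatorics right.
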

\begin{proof}
As the estimates will be clearly uniform for $0<t\leq T$, we specify to $t=1$. 
First, note that we can reduce to consider the overlap up to time $N/2$: indeed, abbreviating $L_{m}=L_m(S^{(1)},S^{(2)})$ and recalling the notation $E^{(2)}_{x,N}[\cdot]=E^{(2)}\left[\cdot |S^{(1)}_{N}=S^{(2)}_{N}=x\sqrt{N}\right]$, simple convexity arguments yield
\begin{eqnarray*}
 E^{(2)}_{x,N}\left[e^{\beta L_N}\right]&\leq&2E^{(2)}_{x,N}\left[e^{2\beta L_{N/2}}\right] E^{(2)}_{x,N}\left[\,e^{\beta L_{N/2}}\right],\\
E^{(2)}_{x,N}\left[L_N e^{\beta L_N}\right]
 &\leq&4E^{(2)}_{x,N}\left[L_{N/2}e^{2\beta L_{N/2}}\right] E^{(2)}_{x,N}\left[\,e^{\beta L_{N/2}}\right].
\end{eqnarray*}
We will further reduce to consider the overlap of two unconditioned random walks. Let $m=N/2$. A simple application of the local limit theorem shows that there exists a constant $C>0$ such that, for all $k\geq 0$ and $x$ in a compact set,
\begin{eqnarray*}
 P^{(2)}\left[L_m=k |S^{(1)}_{tN}=S^{(2)}_{tN}=x\sqrt{N}\right]\leq Ce^{x^2}P^{(2)}\left[L_m=k\right],
\end{eqnarray*}
and, consequently,
\begin{eqnarray*}
 E^{(2)}_{x,N}\left[e^{\alpha L_m}\right]\leq Ce^{x^2}E^{(2)}\left[e^{\alpha L_m}\right],
\end{eqnarray*}
for any $\alpha \geq 0$. The problem is now reduced to estimate the local time at $0$ for the walk $Y_i=S^{(1)}_i-S^{(2)}_i$ under the law $P^{(2)}$, which is a homogeneous pinning problem.
Accordingly, we introduce some notions and results from \cite{Giamba}. Let
\begin{eqnarray}
 z_m(\beta)=E^{(2)}\left[ e^{\beta \sum^{m}_{i=1}{\bf 1}_{Y_i=0}}\right].
\end{eqnarray} 
From \cite{Giamba} (1.6)  and (2.12), it follows that there are two finite constants $c_1,\, c_2>0$ such that
\begin{eqnarray}\label{giamba-estimate}
 z_m(\beta)\leq c_1\, e^{ c_2 \beta^2 m},\quad \forall \, m\geq 1,
\end{eqnarray}
for all $\beta$ small enough. Taking $\beta=N^{-1/2}$ yields (\ref{overlap1}). For (\ref{overlap2}), all we need is a bound on the derivative of $z_m(\beta)$ with respect to $\beta$. Notice that $g(u)=z_m(u)$ is an increasing and convex function with $g(0)=1$ and
\begin{eqnarray}\label{diff-Z}
 g'(u)=E^{(2)}\left[ L_me^{u L_m}\right],
\end{eqnarray}
where $L_m=\sum^{m}_{i=1}{\bf 1}_{Y_i=0}$. By convexity, $1+ug'(u)\leq g(u)+ug'(u) \leq g(2u)$
and consequently,
\begin{eqnarray}
 \tfrac12 g'(u) \leq \frac{g(2u)-1}{2u}.
\end{eqnarray}
Together with (\ref{giamba-estimate}),
\begin{eqnarray}
 \tfrac12 \partial_u z_m(u) \leq \frac{g(2u)-1}{2u}\leq \frac{c_1 e^{4c_2mu^2}-1}{2u}\leq 4 c_3 m u e^{4c_2mu^2},
\end{eqnarray}
with $c_3=c_1c_2$. The last inequality follows from the convexity of $\exp c_2mu^2$.  Taking $u=N^{-1/2}$ and $m=N$ in the string of inequalities above ends the proof of (\ref{overlap2}). \end{proof}
\subsection*{Acknowledgements} The author would like to thank an anonimous referee and T. Albert for a very careful reading of the manuscript, F. Caravenna for valuable advice on pinning estimates, J. Quastel and D. Remenik for many conversations on the SHE, and the department of Mathematics at UW-Madison for their hospitality during his post-doctoral training.

\end{document}